\newtheorem{theorem}{Theorem}[section]
\newtheorem{corollary}[theorem]{Corollary}
\newtheorem{lemma}[theorem]{Lemma}
\newtheorem{proposition}[theorem]{Proposition}
\theoremstyle{remark}
\newtheorem{definition}[theorem]{Definition}
\newtheorem{example}[theorem]{Example}
\newtheorem{remark}[theorem]{Remark}
\newtheorem{notation}[theorem]{Notation}
\newcommand{\CC}{\mathcal{C}}
\newcommand{\BBQ}{{\mathbb Q}}
\newcommand{\Z}{{\mathbb Z}}
\newcommand{\Hom}{\mathrm{Hom}}
\newcommand{\Ker}{\mathrm{Ker}}
\newcommand{\End}{\mathrm{End}}
\newcommand{\Add}{\mathrm{ Add}}
\newcommand{\add}{\mathrm{ add}}
\newcommand{\Prod}{\mathrm{ Prod}}
\newcommand{\Modr}{\mathrm{ Mod}\text{-}}
\newcommand{\Ab}{\text{\bf Ab}}
\begin{document}


\title[The Krull-Remak-Schmidt-Azumaya Theorem]{The Krull-Remak-Schmidt-Azumaya Theorem for idempotent complete categories}

\author{Simion Breaz}

\address{Simion Breaz: "Babe\c s-Bolyai" University, Faculty of Mathematics and Computer Science, Str. Mihail Kog\u alniceanu 1, 400084, Cluj-Napoca, Romania}

\email{simion.breaz@ubbcluj.ro; bodo@math.ubbcluj.ro}



\subjclass[2010]{18E05, 18G80, 16D70}

\keywords{Krull-Remak-Schmidt-Azumaya Theorem, idempotent complete category, exchange property}

\begin{abstract}
We prove a version of the Krull-Remak-Schmidt-Azumaya unique decomposition theorem in idempotent complete additive categories. In particular, this result applies to compactly generated triangulated categories. In addition, we provide an example of an object with the finite exchange property that does not have the exchange property.
\end{abstract}

\maketitle

\section{Introduction}

The Krull-Remak-Schmidt-Azumaya Theorem, proved in \cite{Azu}, states that if a module $M$ has a decomposition $M=\bigoplus_{i\in I}M_i$ as a direct sum of submodules with local endomorphism rings then \begin{enumerate}[{\rm a)}]
    \item every indecomposable direct summand of $M$ is isomorphic to a submodule $M_i$, and 
    \item the above decomposition is unique up to isomorphism and a permutation of the direct summands.\end{enumerate} For historical details on the development of this theorem and its applications in module categories, we refer to \cite{Fac}.

Similar statements have also been proved and applied to more general categories. A version of statement b) for finite decompositions in additive categories is proven in \cite[Theorem 1]{WW}. For a), it is necessary to assume that all idempotents split (a category with this property is called \textit{idempotent complete}). In fact, if every object has a decomposition as a finite direct sum of objects with local endomorphism rings, then the category is idempotent complete, as shown in \cite[Corollary 4.4]{kr-ks}. For the case of infinite sums the statements a) and b) do not hold in additive categories, even if these categories have kernels (see Example \ref{ex:idempotent complete-local-non-exchange} and Example \ref{ex:non-HSA-general-additive}). However, it is proved in \cite[Theorem 2 and Theorem 3]{WW} that the Krull-Remak-Schmidt-Azumaya Theorem is valid in additive categories with kernels that satisfy a Grothendieck condition. For other results involving lattices with a Grothendieck condition, we refer to \cite{HR}.

Recently, certain versions of a) and b) have been used, for instance, in \cite{BR} or \cite{NV}, in idempotent complete categories without kernels. It seems that establishing a general version of the Krull-Remak-Schmidt-Azumaya Theorem in the context of such categories could be useful.

This is the main aim of the present note. We begin with a brief study of objects with the (finite) exchange property in idempotent complete additive categories. In particular, we prove in Proposition \ref{prop:finite-exchange-prop} that an indecomposable object in an idempotent complete additive category has the finite exchange property if and only if its endomorphism ring is local. Proposition \ref{prop:fin-ex-global} shows that every object with local endomorphism ring which satisfies a suitable hypothesis, generalizing finitely approximable or weak Grothendieck conditions used in \cite{WW}, have the exchange property. Furthermore, in Example \ref{ex:idempotent complete-local-non-exchange}, we show that, in the dual category $\Ab^{op}$ of the category of abelian groups, the group of $p$-adic integers has the finite exchange property, but not the exchange property. This gives a negative answer to the question presented in \cite{HR} after Definition 7.3.2.  

Then we will prove some versions of statements a) and b) which are valid in idempotent complete categories, Theorem \ref{thm:ksa-infinite}. Consequently, the Krull-Remak-Schmidt-Azumaya Theorem holds for idempotent complete categories with enough compact objects and, in particular, for compactly generated triangulated categories, as shown in Corollary \ref{cor:triang}.  

\section{Preliminaries}

For the reader's convenience, in this section we recall fundamental properties and concepts related to direct sum decompositions in additive categories and establish the notation used throughout the paper.

Let $\CC$ be an additive category. Recall from \cite[Proposition I.18.1]{Mi} that an object $A$ in $\CC$ has a decomposition $A=B\oplus C$ with the canonical morphisms $u:B\to A$ and $v:C\to A$ if and only if there exist morphisms $p:A\to B$ and $q:A\to C$ such that $pu=1_B$, $qv=1_C$, $qu=0$, $pv=0$, and $up+vq=1_A$. It is easy to check that $q$ is the cokernel of $u$ and $p$ is the cokernel of $v$. Therefore, they are unique up to isomorphism of quotient objects, and we will call them \textit{the canonical projections} associated with the decomposition $A=B\oplus C$. 

\begin{definition} Let $\CC$ be an additive category and $u:B\to A$ a morphism in $\CC$.
\begin{enumerate}[a)]
\item We will say that $u:B\to A$ represents the \textit{direct summand} $B$ of $A$ if there exists a morphism $v:C\to A$ such that $u$ and $v$ are the canonical morphisms for a direct decomposition $A=B\oplus C$. In this case, $C$ is called a \textit{complement} for $B$.
\item The morphism $u$ is a \textit{split monomorphism} if there exists $p:A\to B$ such that $pu=1_B$.
\end{enumerate}
\end{definition}

From the preceding discussion, it follows that two complements for the same direct summand are isomorphic as objects (but not necessarily as subobjects of $A$). Moreover, let us remark that if $u$ represents a direct summand then it is a split monomorphism, but the converse of this implication is not true in general. From \cite[Proposition I.18.5]{Mi} it follows that a split monomorphism $u:B\to A$ represents a direct summand if and only if there exists $p:A\to B$ such that $pu=1_B$ and the idempotent endomorphism $pu$ of $B$ has a kernel. 

\begin{definition}
An additive category $\CC$ is said to be \textit{idempotent-complete} if every idempotent endomorphism in $\CC$ has a kernel.   
\end{definition}

We recall the following characterizations and properties:

\begin{lemma}\label{lem:idempotent-complete}
If $\CC$ is an additive category, the following are equivalent:
\begin{enumerate}[{\rm a)}]
    \item $\CC$ is idempotent-complete;
    \item every idempotent endomorphism in $\CC$ has a cokernel;
    \item for every $A\in \CC$ and every idempotent endomorphism $e$ of $A$ there exist morphisms $q:B\to A$ and $p:A\to B$ such that $pq=1_B$ and $qp=e$.
\end{enumerate}
Moreover, under these conditions, the following are true:
\begin{enumerate}
    \item[{\rm d)}] \cite[Proposition I.18.5]{Mi} If $e$ is an idempotent endomorphism of $A$ then $A=\Ker(e)\oplus \Ker(1-e)$, where the canonical morphisms are those associated to these kernels.
    \item[{\rm e)}] \cite[Lemma I.3.4]{Bass} If $k:A\to B$ and $u:B\to A$ are morphisms such that $ku$ is an automorphism of $B$ then $u$ represents a direct summand of $A$. 
\end{enumerate}
\end{lemma}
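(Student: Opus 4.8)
The plan is to handle the "if and only if" first and then the two enumerated items, though (i) is really just a restatement of the splitting we are proving. First, for the main equivalence: if $\CC$ is idempotent complete, take an idempotent $e$ on $A$. By definition $1-e$ is also idempotent, so both $e$ and $1-e$ have kernels (and cokernels). I would set $B=\Ker(1-e)$ with inclusion $q\colon B\to A$. Since $e(1-e)=0$, the morphism $e\colon A\to A$ factors through $q$, giving $p\colon A\to B$ with $qp=e$. Then $qpq=eq$, and since $(1-e)q=0$ we have $eq=q$, so $qpq=q=q\cdot 1_B$; as $q$ is monic (a kernel), $pq=1_B$. Conversely, given such $q,p$ for every idempotent, one checks directly that $q$ is a kernel of $1-e$ and $p$ is a cokernel of $1-e$ (hence $e$ has a cokernel too, applying the hypothesis to $1-e$), so all idempotents split and $\CC$ is idempotent complete. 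This is the routine bookkeeping part.

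For item (i): given the splitting $q\colon B\to A$, $p\colon A\to B$ with $pq=1_B$, $qp=e$, I claim $B\cong\Ker(1-e)$ via $q$ and that the complementary summand is $\Ker(e)$. Indeed $qp$ and $1-qp$ are complementary idempotents; applying the first part to $1-e=1-qp$ produces the second summand, and one verifies the canonical injections are exactly the kernel morphisms of $1-e$ and of $e$ respectively. This gives $A=\Ker(e)\oplus\Ker(1-e)$ as claimed, citing \cite[Proposition 18.5]{Mi}.

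For item (ii): suppose $k\colon A\to B$, $u\colon B\to A$ with $ku$ an automorphism $\alpha$ of $B$. Then $e:=u\alpha^{-1}k$ is an idempotent on $A$ (since $e^2=u\alpha^{-1}(ku)\alpha^{-1}k=u\alpha^{-1}k=e$). By the first part there exist $q'\colon B'\to A$, $p'\colon A\to B'$ with $q'p'=1_{B'}$ and $p'q'=e$; in particular $q'$ is a split monomorphism representing a direct summand. It remains to identify this summand with $u$: the morphism $p':=\alpha^{-1}k\colon A\to B$ satisfies $p'u=\alpha^{-1}ku=1_B$, so $u$ is itself a split monomorphism and hence directly represents a direct summand of $A$ — in fact one can skip the idempotent entirely here and just note $p'u=1_B$. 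I would present it this way, citing \cite[Lemma I.3.4]{Bass}.

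The main obstacle, such as it is, is purely organizational: making sure the factorization $e=qp$ through the kernel $B=\Ker(1-e)$ is set up with the right signs and that $q$ being monic is correctly invoked to cancel and obtain $pq=1_B$. Everything else is a short diagram chase, and items (i) and (ii) are quoted from the literature with only a line of verification needed.
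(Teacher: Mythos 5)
Your argument is correct and is the standard one; the paper itself offers no proof of this lemma, delegating the equivalence and items (i)--(ii) to \cite{Mi} and \cite{Bass}, so your write-up simply supplies the routine verification those references contain. Two small points. First, watch the composition order: with $q\colon B\to A$ and $p\colon A\to B$ and the usual convention $fg=f\circ g$, the composite $qp$ is an endomorphism of $A$ and $pq$ one of $B$, so the type-correct equations are $qp=e$ and $pq=1_B$ --- exactly the ones you actually use in your computation ($qpq=eq=q$, then cancel the monomorphism $q$); the statement as printed has them transposed. Second, your parenthetical in item (ii) --- that one can ``skip the idempotent entirely'' because $\alpha^{-1}k\cdot u=1_B$ already exhibits $u$ as a split monomorphism --- proves less than what is asserted: in a general additive category a split monomorphism need not admit a complement, and the entire content of (ii) is that idempotent completeness supplies one. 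So the detour through the idempotent $e=u\alpha^{-1}k$, splitting it as in the first part and transporting the resulting decomposition $A=\Ker(e)\oplus\Ker(1-e)$ along the isomorphism $p'u\colon B\to B'$ (with inverse $\alpha^{-1}kq'$), is not optional; it is the proof. Your main line of argument does exactly this, so only the shortcut remark should be dropped.
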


\begin{remark}
In an idempotent-complete additive category, a monomorphism represents a direct summand of an object if and only if it is a split monomorphism. When we refer to a direct summand we often understand that it comes together with a split monomorphism, and that it is the domain of this monomorphism.      
\end{remark}

In the following, we will use the following notations:

\begin{notation}\label{notation-direct-sums-summands}
Let $X=\bigoplus_{i\in I}A_i$ be a direct sum in $\CC$ with the canonical morphisms $u_i:A_i\to X$. In this case, we will say that \textit{the family $(u_i)_{i\in I}$ represents a direct decomposition of $X$}. If $J\subseteq I$, we denote $X(J)=\bigoplus_{j\in J}A_j$, and $u_J:X(J)\to X$ will be the morphism induced by the family $u_j, j\in J$. Moreover, we will denote by $p_J:X\to X(J)$ the canonical projection associated to the direct decomposition $X=X(J)\oplus X(I\setminus J)$. Hence, $p_J$ is the cokernel of $u_{I\setminus J}$. Moreover, let us observe that $p_Ju_j$, $j\in J$, are the canonical morphisms of the direct decomposition $X(J)=\bigoplus_{j\in J}A_j$.    
\end{notation}

\begin{notation}
For the morphisms $f:A_1\oplus A_2\to B_1\oplus B_2$, we will use the (standard) matrix notations $f=\left[\begin{array}{cc}
   \alpha_{11}  & \alpha_{12} \\
    \alpha_{21} & \alpha_{22}
\end{array}\right],$ where $\alpha_{ij}=p_i f u_j:A_j\to B_i$ are the morphism induced by the canonical morphisms $u_i:A_i\to A_1\oplus A_2$ and $p_j:B_1\oplus B_2\to B_j$. For more details, we refer to \cite[Section I.3]{Bass} and \cite[p.26]{Mi}.   
\end{notation}


\section{The exchange property in idempotent complete additive categories}

The exchange property was introduced in \cite{CJ} for general algebraic systems and in \cite{WW} for additive categories. 

\begin{definition}
   Let $\CC$ be an additive category. An object $A\in \CC$ has the \textit{exchange property with respect to a direct decomposition $B=\bigoplus_{i\in I}B_i$} (with the canonical monomorphisms $u_i:B_i\to B$) if for any split monomorphism $u:A\to B$, there exist direct decompositions $B_i=C_i\oplus D_i$ with the canonical monomorphisms $v_i$ and $w_i$ such that $B=A\oplus \left(\bigoplus_{i\in I}C_i\right)$ with the canonical monomorphisms $u$ and $u_iv_i$. If we know that this property is valid for all (finite) direct decompositions of the objects of $\CC$ then we will say that $A$ has the (\textit{finite}) \textit{exchange property}.  
\end{definition}

\begin{remark}\label{rem:KS-first} 
Suppose that the object $A\in \CC$ has the exchange property with respect to a direct decomposition $B=\bigoplus_{i\in I}B_i$ and that $D_i$ are the direct summands used in the above definition. Then $A$ and $\bigoplus_{i\in I}D_i$ are complements of the same direct summand $\bigoplus_{i\in I}C_i$ of $B$, hence $A\cong \bigoplus_{i\in I}D_i$. If $A$ is indecomposable, then there exists $i_0\in I$ such that $D_i=0$ for all $i\neq i_0$.   
\end{remark}

The following lemma is proved in \cite{war} (see also \cite{kr-ks}). 

\begin{lemma}\label{lem:warfield-equiv}
Let $\CC$ be an idempotent complete additive category. If $A$ is an object of $\CC$ and $E$ is the endomorphism ring of $A$ then the functor $\Hom(A,-):\add(A)\to \mathrm{proj}(E)$ is an equivalence, where $\add(A)$ is the full subcategory of $\CC$ of all objects which are direct summands of finite direct sums of copies of $A$, and $\mathrm{proj}(E)$ represents the category of all finitely generated projective right $E$-modules.     
\end{lemma}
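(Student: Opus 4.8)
The plan is to check directly the three things that make $F:=\Hom(A,-)$ an equivalence: that it actually lands in $\proj(E)$, that it is fully faithful on $\add(A)$, and that it is dense onto $\proj(E)$. Everything is bootstrapped from the single observation that $F(A)=\Hom(A,A)=E$, the free right $E$-module of rank one, together with the fact that $F$ is an additive functor and therefore preserves finite biproducts and split idempotents.

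For the first two points I would argue as follows. Additivity gives $F(A^n)\cong E^n$ compatibly with the biproduct structure, and since $F$ sends split monomorphisms to split monomorphisms, any $X\in\add(A)$ — being a direct summand of some $A^n$ — has $F(X)$ a direct summand of $E^n$, i.e.\ $F(X)\in\proj(E)$; so $F$ does restrict to a functor $\add(A)\to\proj(E)$. For full faithfulness, the composition law in $\CC$ identifies $\Hom_\CC(A^n,A^m)$ with the matrix group $M_{m\times n}(E)$, and likewise $\Hom_E(E^n,E^m)\cong M_{m\times n}(E)$; under these identifications $F$ becomes the identity map, so it is bijective on Hom-groups between free objects, and in particular $F\colon\End_\CC(A^n)\to\End_E(E^n)$ is a ring isomorphism. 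For arbitrary $X,Y\in\add(A)$, fix split monos $i_X\colon X\to A^n$, $i_Y\colon Y\to A^m$ with retractions $p_X,p_Y$; then $g\mapsto i_Y g p_X$ exhibits $\Hom_\CC(X,Y)$ as a retract of $\Hom_\CC(A^n,A^m)$, with retraction $h\mapsto p_Y h i_X$, and $F$ commutes with these (functoriality), so bijectivity on the free level transfers to a bijection $\Hom_\CC(X,Y)\to\Hom_E(F(X),F(Y))$.

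The remaining and essential step — the one that genuinely uses idempotent completeness — is density. Given $P\in\proj(E)$, choose $n$ and an idempotent $\epsilon\in\End_E(E^n)\cong M_n(E)$ with $\Ima(\epsilon)\cong P$. By the ring isomorphism $F\colon\End_\CC(A^n)\to\End_E(E^n)$ established above, there is an endomorphism $e$ of $A^n$, necessarily idempotent, with $F(e)=\epsilon$. Since $\CC$ is idempotent complete, Lemma \ref{lem:idempotent-complete} produces $q\colon X\to A^n$ and $p\colon A^n\to X$ with $pq=1_X$ and $qp=e$; in particular $X\in\add(A)$. Applying $F$ gives $F(p)F(q)=1$ and $F(q)F(p)=F(e)=\epsilon$, so $(F(X),F(q),F(p))$ is a splitting of the idempotent $\epsilon$ in $\Modr E$; by uniqueness of the image of a split idempotent, $F(X)\cong\Ima(\epsilon)\cong P$. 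Hence $F$ is dense, and together with the previous paragraph this shows $F$ is an equivalence.

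I expect the only real work to be the density argument, i.e.\ the passage "$F$ is a ring isomorphism on $\End_\CC(A^n)$, so idempotents lift, so by idempotent completeness they split and their images realize every projective"; the full-faithfulness part is essentially bookkeeping, where the one point to be careful about is the direction of arrows and the left/right module conventions in the matrix identifications $\Hom_\CC(A^n,A^m)\cong M_{m\times n}(E)\cong\Hom_E(E^n,E^m)$.
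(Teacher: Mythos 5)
Your proof is correct and is essentially the standard argument: the paper itself gives no proof of this lemma (it simply cites Warfield and Krause), and the argument in those references is exactly yours --- full faithfulness via the matrix identification $\Hom_\CC(A^n,A^m)\cong M_{m\times n}(E)\cong\Hom_E(E^n,E^m)$ plus a retract argument, and density by lifting an idempotent presenting $P$ to an idempotent endomorphism of $A^n$ and splitting it using idempotent completeness. The only point worth flagging is notational: the paper's Lemma \ref{lem:idempotent-complete} states the splitting as ``$qp=1_B$ and $pq=e$'' with $q\colon B\to A$ and $p\colon A\to B$, i.e.\ with the opposite composition convention from the one you (correctly) use, so your reading of that lemma is the mathematically intended one.
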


\begin{definition}
 A ring $R$ is an \textit{exchange ring} if the regular left $R$-module $R$ has the finite exchange property.   
\end{definition}

Note that this notion is a left-right symmetric property. An $R$-module $M$ has the finite exchange property if and only if its endomorphism ring is exchange, \cite{W72}. Exchange rings can be characterized by using ring theoretic properties, see \cite[Theorem 1]{Monk} and \cite[Theorem 2.1]{Ni}. From the proofs of these two theorems we deduce the following characterization: 

\begin{lemma}\label{lem:exchange-M+M}
An $R$-module $M$ has the finite exchange property if and only if it has the exchange property with respect to all direct sums $M_1\oplus M_2$ with $M_1=M=M_2$. 
\end{lemma}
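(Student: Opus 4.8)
The plan is to reduce the statement to the known ring-theoretic characterizations of exchange rings, using the dictionary between the module $M$ and its endomorphism ring $E=\End_R(M)$. Recall from \cite{W72} that $M$ has the finite exchange property if and only if $E$ is an exchange ring; so the assertion to be proved is that $E$ is an exchange ring if and only if the regular module $M$ (equivalently, in the categorical translation, the projective $E$-module corresponding to $M$) has the exchange property with respect to every decomposition of the form $M_1\oplus M_2$ with $M_1\cong M_2$. One direction is immediate: the finite exchange property trivially includes the exchange property with respect to all such $2$-term decompositions, since those are in particular finite direct sums.

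For the converse, I would invoke the ring-theoretic criteria cited just before the lemma, namely Monk's characterization \cite[Theorem 1]{Monk} and Nicholson's \cite[Theorem 2.1]{Ni}. The point, which the paper signals by saying ``from the proofs of these two theorems we deduce the following,'' is that the equivalence between ``$E$ is exchange'' and any of the standard internal conditions (e.g. for every $x\in E$ there is an idempotent $e\in Ex$ with $1-e\in E(1-x)$, or the suitable-element/clean-type conditions) is established by manipulations that, when read through the functor $\Hom(M,-)$, only ever require splitting off summands inside $M\oplus M$. Concretely, Nicholson's argument shows $E$ exchange $\iff$ for each $a\in E$ there is an idempotent $e=e^2\in aE$ with $1-e\in (1-a)E$; translating $a$ into an endomorphism and using the $2\times 2$ matrix picture, this existence statement is exactly what the exchange property for $M$ with respect to the decomposition $(M\oplus 0)\oplus(0\oplus M)$ of $M\oplus M$ provides when we test it against the split monomorphism $M\to M\oplus M$ determined by $a$.

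So the key steps, in order, are: (1) restate both sides in terms of $E=\End_R(M)$ via \cite{W72}; (2) observe the forward implication is trivial; (3) for the converse, unwind the hypothesis ``exchange property w.r.t.\ $M\oplus M$'' into the statement that for each $a\in E$ there is an idempotent in $aE$ congruent to $1$ modulo $(1-a)E$ — this is the heart and is done by feeding an appropriate split monomorphism $M\to M\oplus M$ (built from $a$) into the definition of the exchange property and reading off the resulting idempotent from the two complementary summands; and (4) conclude by Nicholson's theorem \cite[Theorem 2.1]{Ni} that $E$ is an exchange ring, hence $M$ has the finite exchange property.

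The main obstacle is step (3): one has to choose the right split monomorphism $M\to M\oplus M$ so that the decompositions $M_i=C_i\oplus D_i$ produced by the exchange property yield precisely the idempotent $e\in aE$ with $1-e\in(1-a)E$ (or whichever of the equivalent Monk/Nicholson conditions one targets), and to check that the ``$M_1\cong M_2$'' clause is harmless — i.e.\ that identifying the two copies does not lose information. This is essentially a careful extraction from the published proofs of \cite{Monk} and \cite{Ni} rather than a new argument, which is why the paper presents it as a consequence ``from the proofs''; I would cite those proofs for the bookkeeping and only indicate the translation through $\Hom(M,-)$ and the $2\times 2$ matrix ring $\End_R(M\oplus M)\cong M_2(E)$.
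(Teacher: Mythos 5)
Your proposal is correct and matches the paper's intent: the paper gives no independent proof of this lemma, presenting it as a deduction from the proofs of \cite[Theorem 1]{Monk} and \cite[Theorem 2.1]{Ni}, and your reduction to $E=\End_R(M)$ via \cite{W72}, followed by extracting Nicholson's idempotent condition from the exchange property applied to the split monomorphism $M\to M\oplus M$ built from $a\in E$, is exactly the argument being invoked. Your identification of step (3) as the only nontrivial bookkeeping is accurate, and the left/right switch you make is harmless since the exchange condition on $E$ is left-right symmetric.
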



In the next result, we use Lemma \ref{lem:warfield-equiv} to characterize the objects that satisfy the finite exchange property in idempotent-complete additive categories.

\begin{proposition}\label{prop:finite-exchange}
Let $\CC$ be an idempotent complete additive category. If $A$ is an object from $\CC$ and $E$ is its endomorphism ring, the following are equivalent:
\begin{enumerate}[{\rm a)}]
    \item $A$ has the finite exchange property;
    \item $E$ is an exchange ring. 
\end{enumerate}
\end{proposition}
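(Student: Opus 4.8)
The strategy is to reduce the statement to the corresponding fact about modules via the equivalence from Lemma~\ref{lem:warfield-equiv}, combined with the reduction to ``$M_1 = M_2$'' decompositions provided by Lemma~\ref{lem:exchange-M+M}. The key observation is that finite direct decompositions of objects in $\add(A)$ correspond, under $\Hom(A,-)$, exactly to finite direct decompositions of finitely generated projective right $E$-modules, and that splitting of idempotents is available on both sides.

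First I would prove the implication i) $\Rightarrow$ ii). Assume $A$ has the finite exchange property. I want to show the regular module $E_E$ has the finite exchange property, and by Lemma~\ref{lem:exchange-M+M} it suffices to check the exchange property with respect to decompositions $E \oplus E$. Since $\Hom(A,-)\colon \add(A) \to \proj(E)$ is an equivalence sending $A$ to $E_E$, a decomposition $E \oplus E \cong \Hom(A, A) \oplus \Hom(A,A)$ together with a split monomorphism $E_E \to E_E \oplus E_E$ pulls back (via a quasi-inverse) to a split monomorphism $A \to A \oplus A$ in $\add(A)$; applying the exchange property of $A$ to this and pushing the resulting decompositions $A = C_i \oplus D_i$ back through $\Hom(A,-)$ yields the required refinement of $E \oplus E$. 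One must check that the canonical morphisms match up under the equivalence, but this is formal since a full, faithful, dense additive functor reflects and preserves split monomorphisms, biproduct decompositions, and the associated structural morphisms.

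For the converse ii) $\Rightarrow$ i), suppose $E$ is an exchange ring, so $E_E$ has the finite exchange property. Let $u\colon A \to B$ be a split monomorphism with $B = B_1 \oplus \cdots \oplus B_n$ a finite decomposition. The difficulty here is that $B$ and the $B_i$ need not lie in $\add(A)$, so the equivalence of Lemma~\ref{lem:warfield-equiv} does not directly apply to them. To get around this, I would replace $A$ by a larger object: set $A' = A \oplus B$ (or work with a common ``ambient'' object containing both $A$ and all $B_i$ as summands), note that $A, B, B_1,\dots,B_n$ all lie in $\add(A')$, and transport the whole configuration via $\Hom(A', -)$ into $\proj(E')$ where $E' = \End(A')$. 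The endomorphism ring $E' = \End(A \oplus B)$ contains $E = \End(A)$ as the corner ring $e E' e$ for an idempotent $e$, and a corner ring of an exchange ring is again an exchange ring, so $\Hom(A', A)$ — being a finitely generated projective $E'$-module with local-free... more precisely with $\End_{E'}(\Hom(A',A)) \cong E$ an exchange ring — has the finite exchange property as an $E'$-module. Then I apply the module-level exchange property to the split monomorphism $\Hom(A',u)\colon \Hom(A',A) \to \Hom(A',B) = \bigoplus_i \Hom(A',B_i)$, obtain decompositions $\Hom(A',B_i) = P_i \oplus Q_i$ refining the sum, and transport back through the equivalence to get $B_i = C_i \oplus D_i$ with $B = A \oplus (\bigoplus_i C_i)$ and the canonical morphisms of the required form.

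The main obstacle is precisely this last point: the objects $B, B_i$ appearing in an arbitrary finite decomposition need not belong to $\add(A)$, so one cannot naively invoke Lemma~\ref{lem:warfield-equiv}. The fix via enlarging $A$ to $A \oplus B$ and using that ``exchange ring'' passes to corner rings $eRe$ is the crux; once this is in place the rest is a routine transport-of-structure argument along the (additive, fully faithful, dense) functor $\Hom(A',-)$, using that such a functor preserves and reflects split monomorphisms and finite biproducts together with their canonical injection/projection morphisms. I would also record explicitly that the finite exchange property only involves finite index sets, so no completeness of $\add(A')$ beyond splitting of idempotents (which holds since $\CC$ is idempotent complete) is needed.
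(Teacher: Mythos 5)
Your proposal is correct and follows essentially the same route as the paper: for i)$\Rightarrow$ii) reduce to $E\oplus E$ via Lemma~\ref{lem:exchange-M+M} and transport along the equivalence $\Hom(A,-)\colon\add(A)\to\proj(E)$, and for ii)$\Rightarrow$i) pass to an ambient object containing $A$ and all the $B_i$ as summands — the paper simply uses $B$ itself (since $u$ split means $A\in\add(B)$) where you use $A\oplus B$, an immaterial difference. The only thing to tidy is your aside about corner rings of exchange rings, which points in the wrong direction for what you need; the step you actually (correctly) rely on is Warfield's theorem that a module has the finite exchange property if and only if its endomorphism ring is an exchange ring, applied to $\End_{E'}(\Hom(A',A))\cong E$.
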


\begin{proof}
a)$\Rightarrow$b) Let $H^A=\Hom(A,-):\CC\to \Modr E$ be the covariant $\Hom$ functor induced by $A$. From Lemma \ref{lem:warfield-equiv} the restriction $H^A:\add(A)\to \mathrm{proj}(E)$ is an equivalence. To prove that the regular module $E$ has the finite exchange property, we will use Lemma \ref{lem:exchange-M+M} to observe that it is enough to prove that if $G=E\oplus E$ with the canonical monomorphisms $u_1,u_2:E\to G$ and we have a direct decomposition $G=M\oplus N$ (of $E$-modules) with canonical monomorphisms $u_M$ and $u_N$ and $M\cong E$ then there exist split monomorphisms $v_{1}:E_1\to E$ and $v_{2}:E_2\to E$ such that $G=M\oplus E_1\oplus E_2$ with canonical monomorphisms $u_M$, $u_1v_1$, and $u_2v_2$.

Since $G$, $M$ and $N$ are in $\mathrm{proj}(E)$, we can assume that there exists in $\CC$ a direct decomposition $B\oplus C=A\oplus A$ such that $B\cong A$, $\Hom(A,C)=N$, and for the canonical morphisms $\overline{u}_B$ and $\overline{u}_C$, associated to the decomposition $B\oplus C=A\oplus A$, we have  $\Hom(A,\overline{u}_B)=u_M$ and $\Hom(A,\overline{u}_C)=u_N$. 

Therefore, we can apply the finite exchange property for $A$ to find two split monomorphisms $\overline{v}_i:D_i\to A$, $i=1,2$, such that $\overline{u}_B$, $\overline{u}_1\overline{v}_1$ and $\overline{u}_2\overline{v}_2$ give a direct decomposition for $A\oplus A$ ($\overline{u}_1$ and $\overline{u}_2$ are the canonical morphisms $A\to A\oplus A$). Applying $\Hom(A,-)$ we obtain the desired decomposition for $E\oplus E$, hence $E$ has the finite exchange property. 

b)$\Rightarrow$a) Assume that $I$ is a finite set, $B=\bigoplus_{i\in I} B_i$ is an object from $\CC$, and that $u:A\to B$ is a split monomorphism. We consider the equivalence $\Hom(B,-):\add(B)\to \mathrm{proj}(\End(B))$. It follows that the endomorphism ring of $\Hom(B,A)$ is isomorphic to $E$, hence the $\End(B)$-module $\Hom(B,A)$ has the finite exchange property. Then we can apply the same strategy as in the proof of a)$\Rightarrow$b) to obtain the desired decomposition for $B$. 
%
\end{proof}

From \cite[Proposition 1]{W69} we obtain the following characterization.

\begin{proposition}\label{prop:finite-exchange-prop}
Let $\CC$ be an idempotent complete additive category. An indecomposable object $A$ from $\CC$ has the finite exchange property if and only if $\End(A)$ is a local ring.
\end{proposition}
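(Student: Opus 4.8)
The plan is to reduce the statement to the well-known module-theoretic fact that a module is indecomposable with the finite exchange property if and only if its endomorphism ring is local, and then transport that fact across the equivalence $\Hom(A,-)\colon\add(A)\to\proj(E)$ supplied by Lemma~\ref{lem:warfield-equiv}. Concretely, by Proposition~\ref{prop:finite-exchange} the object $A$ has the finite exchange property if and only if $E=\End(A)$ is an exchange ring, so it suffices to show that, for an indecomposable $A$, $E$ is exchange if and only if $E$ is local. Here ``indecomposable'' must be translated: $A$ is indecomposable in $\CC$ precisely when $A$ has no nontrivial idempotent endomorphism (using that $\CC$ is idempotent complete, so every idempotent splits off a direct summand), which is exactly the statement that the ring $E$ has no nontrivial idempotents, i.e. the projective module $E_E$ is indecomposable.

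The key steps, in order, are: (1) record the equivalence $A$ indecomposable in $\CC$ $\iff$ $E$ has no nontrivial idempotents, invoking idempotent completeness of $\CC$; (2) apply Proposition~\ref{prop:finite-exchange} to replace ``$A$ has the finite exchange property'' by ``$E$ is an exchange ring''; (3) cite \cite[Proposition~1]{W69}, which asserts that a module with no nontrivial idempotents in its endomorphism ring has the finite exchange property if and only if that endomorphism ring is local — applied to the regular module $E_E$, whose endomorphism ring is $E$ itself, this says $E$ exchange with no nontrivial idempotents $\iff$ $E$ local; (4) assemble: if $A$ is indecomposable with the finite exchange property, then $E$ is exchange and idempotent-trivial, hence local; conversely, if $E$ is local then $E$ has no nontrivial idempotents (so $A$ is indecomposable) and $E$ is an exchange ring (a local ring is trivially exchange), hence $A$ has the finite exchange property.

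There is not really a hard obstacle here; the proof is a short dictionary translation once Proposition~\ref{prop:finite-exchange} and Lemma~\ref{lem:warfield-equiv} are in hand. The one point that deserves a word of care is making sure the notion of ``indecomposable'' is read correctly on both sides of the equivalence: in an arbitrary additive category one could worry about objects that admit no nontrivial splitting yet whose endomorphism ring has an idempotent, but idempotent completeness of $\CC$ rules this out, so $A$ indecomposable is genuinely equivalent to $\End(A)$ having only the trivial idempotents $0,1$. With that observation, the equivalence in \cite[Proposition~1]{W69} applies verbatim to $E_E$ and the proof is complete.

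\begin{proof}
Let $E=\End(A)$. Since $\CC$ is idempotent complete, Lemma~\ref{lem:idempotent-complete}(i) shows that every idempotent endomorphism $e$ of $A$ yields a decomposition $A=\Ker(e)\oplus\Ker(1-e)$; hence $A$ is indecomposable in $\CC$ if and only if the only idempotents of $E$ are $0$ and $1$, that is, if and only if the regular right $E$-module $E_E$ is indecomposable. By Proposition~\ref{prop:finite-exchange}, $A$ has the finite exchange property if and only if $E$ is an exchange ring, equivalently if and only if the module $E_E$ has the finite exchange property.

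Suppose $A$ is indecomposable and has the finite exchange property. Then $E_E$ is an indecomposable module with the finite exchange property, so by \cite[Proposition~1]{W69} its endomorphism ring, which is $E$ itself, is local.

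Conversely, suppose $E$ is local. Then the only idempotents of $E$ are $0$ and $1$, so $A$ is indecomposable. Moreover a local ring is an exchange ring, so by Proposition~\ref{prop:finite-exchange} the object $A$ has the finite exchange property.
\end{proof}
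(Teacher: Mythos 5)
Your proof is correct and follows the route the paper intends: the paper simply cites \cite[Proposition~1]{W69}, implicitly combining it with Proposition~\ref{prop:finite-exchange}, and your write-up supplies exactly that translation (indecomposability of $A$ corresponds to $E$ having no nontrivial idempotents via idempotent completeness, the finite exchange property corresponds to $E$ being an exchange ring, and Warfield's result applied to $E_E$ closes the loop). No gaps; this is essentially the same argument with the details made explicit.
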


\begin{corollary}\cite[Lemma 3.5]{Bass}\label{cor:direct-summand-local}
Suppose that $\CC$ is an idempotent complete additive category. Let $X=\bigoplus_{i=1}^n A_i$ be an object in $\CC$ and $B$ a direct summand in $X$. 
 
If $\End(B)$ is a local ring then there exists $j\in\{1,\dots,n\}$ and a direct summand $A_j'$ of $A_j$ such that $X=B\oplus A'_j\oplus\left(\bigoplus_{i\neq j}A_i\right)$.   \end{corollary}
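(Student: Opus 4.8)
The plan is to deduce this directly from the finite exchange property, which we have available thanks to Proposition~\ref{prop:finite-exchange-prop}. Since $\End(B)$ is local and $\CC$ is idempotent complete, $B$ is indecomposable (a local ring has no nontrivial idempotents, and by Lemma~\ref{lem:warfield-equiv} direct sum decompositions of $B$ correspond to such idempotents) and therefore, by Proposition~\ref{prop:finite-exchange-prop}, $B$ has the finite exchange property. Now apply the finite exchange property to the split monomorphism $u\colon B\to X$ and the finite decomposition $X=\oplus_{i=1}^n A_i$: there are decompositions $A_i=C_i\oplus D_i$ with $X=B\oplus(\oplus_{i=1}^n C_i)$, where the canonical morphisms of the $C_i$ in $X$ are $v_iu_i$ in the notation of the definition of the exchange property.

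The next step is to use the indecomposability of $B$ together with Remark~\ref{rem:KS-first}. Since $B$ and $\oplus_{i=1}^n D_i$ are complements of the same direct summand $\oplus_{i=1}^n C_i$ of $X$, Remark~\ref{rem:KS-first} gives that $B\cong\oplus_{i=1}^n D_i$, and, because $B$ is indecomposable, there is an index $j\in\{1,\dots,n\}$ with $D_i=0$ for all $i\neq j$; thus $C_i=A_i$ for $i\neq j$ and $A_j=C_j\oplus D_j$. Renaming $A_j'=C_j$ and $D=D_j$, the decomposition $X=B\oplus(\oplus_{i=1}^n C_i)$ becomes $X=B\oplus A_j'\oplus(\oplus_{i\neq j}A_i)$, which is exactly what is claimed. (One should double-check that the canonical morphism of $A_j'$ inside $X$ is indeed the composite of the inclusion $A_j'\hookrightarrow A_j$ with $u_j$, but this is immediate from how the exchange property is stated.)

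I do not expect any real obstacle here: the work is entirely in reducing to Proposition~\ref{prop:finite-exchange-prop} and then reading off the conclusion from Remark~\ref{rem:KS-first}. The only point requiring a line of care is the passage from ``$B$ has local endomorphism ring'' to ``$B$ is indecomposable'' in the categorical setting; this follows because a decomposition $B=B_1\oplus B_2$ with both summands nonzero would, via $\Hom(B,-)$ and Lemma~\ref{lem:warfield-equiv}, produce a nontrivial idempotent in $\End(B)$, contradicting locality. Alternatively, one can bypass indecomposability and argue with Remark~\ref{rem:KS-first} that $D_j$ itself has local endomorphism ring and absorb any splitting, but the indecomposability route is cleaner. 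Everything else is bookkeeping with the canonical morphisms.
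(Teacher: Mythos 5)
Your proof is correct and is exactly the derivation the paper intends: the paper states this corollary without proof (citing Bass), but it follows, as you argue, from Proposition~\ref{prop:finite-exchange-prop} (locality plus indecomposability gives the finite exchange property) combined with Remark~\ref{rem:KS-first} to force all but one $D_i$ to vanish. The only point worth a word of care, which you already flag, is that $\End(B)$ local forces $B\neq 0$ and indecomposable, so that Proposition~\ref{prop:finite-exchange-prop} and the remark apply.
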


\begin{proposition}\label{prop:finite-exchange-prop-ds}
Let $\CC$ be an additive category. 
\begin{enumerate}[{\rm a)}]
    \item A finite direct sum of objects with the (finite) exchange property has the (finite) exchange property.
\item If $\CC$ is idempotent complete, a direct summand of an object with the finite exchange property has the finite exchange property.
\end{enumerate}
\end{proposition}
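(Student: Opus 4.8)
For part (a), the plan is to reduce to the case of two summands by induction: if $A = A_1 \oplus A_2$ with each $A_i$ having the (finite) exchange property, and $u : A \to B$ is a split monomorphism into a direct sum $B = \oplus_{i \in I} B_i$, then first apply the exchange property of $A_1$ (viewed as a summand of $A$, hence of $B$) to obtain decompositions $B_i = C_i^{(1)} \oplus D_i^{(1)}$ with $B = A_1 \oplus (\oplus_i C_i^{(1)})$. The piece $\oplus_i D_i^{(1)}$ is a complement of $A_1$ in $B$, and we need to exchange $A_2$ against the decomposition $\oplus_i D_i^{(1)} = \oplus_i D_i^{(1)}$; since each $D_i^{(1)}$ is a summand of $B_i$, refining it further gives $D_i^{(1)} = C_i^{(2)} \oplus D_i^{(2)}$ and hence $B_i = C_i^{(1)} \oplus C_i^{(2)} \oplus D_i^{(2)}$. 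One then checks that $B = A_1 \oplus A_2 \oplus (\oplus_i (C_i^{(1)} \oplus C_i^{(2)})) = A \oplus (\oplus_i C_i)$ with $C_i = C_i^{(1)} \oplus C_i^{(2)}$, which is the required conclusion. The only subtlety is bookkeeping: one must verify that "$A_2$ is a summand of the complement $\oplus_i D_i^{(1)}$ of $A_1$" — this follows because $A = A_1 \oplus A_2$ sits inside $B$ via the split mono $u$, and modding out $A_1$ (using idempotents, but in fact no idempotent-completeness is needed here, just that $B = A_1 \oplus X$ with $A \subseteq B$ a summand containing $A_1$, so $A = A_1 \oplus (A \cap X)$-type reasoning at the level of split monos) exhibits $A_2$ as a summand of $X = \oplus_i D_i^{(1)}$.

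For part (b), suppose $A$ has the finite exchange property and $A = A' \oplus A''$ in the idempotent complete category $\CC$. By Proposition \ref{prop:finite-exchange}, $\End(A)$ is an exchange ring. The endomorphism ring $\End(A')$ is isomorphic to the corner ring $e \End(A) e$ where $e$ is the idempotent of $\End(A)$ projecting onto $A'$ (this uses that $\CC$ is additive; idempotent-completeness guarantees that the summand $A'$ genuinely exists as an object with $\End(A') \cong e\End(A)e$). It is a standard fact in ring theory that a corner ring $e R e$ of an exchange ring $R$ (with $e$ idempotent) is again an exchange ring. Hence $\End(A')$ is exchange, and applying Proposition \ref{prop:finite-exchange} in the other direction, $A'$ has the finite exchange property.

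The main obstacle I anticipate is part (a): making the two-step refinement argument precise, in particular correctly identifying $A_2$ as a direct summand of the complement $\oplus_{i\in I} D_i^{(1)}$ and then correctly reassembling the decompositions $B_i = C_i^{(1)} \oplus C_i^{(2)} \oplus D_i^{(2)}$ with their canonical monomorphisms so that the composite monomorphisms match the ones required in the definition of the exchange property. The ring-theoretic input for part (b) (corner rings of exchange rings are exchange, and $\End$ of a summand is a corner ring) is essentially off-the-shelf, so that direction should be short once the corner-ring fact is cited; the case of infinite exchange in (a) for a genuinely infinite direct sum $B$ works verbatim the same way since the refinement is applied index-by-index and the induction is only on the (finite) number of summands $A_1, \dots, A_n$ of $A$.
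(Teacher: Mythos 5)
Part (b) of your proposal is correct and is essentially the paper's own argument: the endomorphism ring of a direct summand is a corner ring $e\End(A)e$ (idempotent completeness realizes the summand from the idempotent), corners of exchange rings are exchange, and Proposition \ref{prop:finite-exchange} transfers this back to the object. For part (a) the paper simply cites \cite[Lemma 6]{WW}, while you attempt the direct two-step exchange argument; the strategy (exchange $A_1$ first, then exchange $A_2$ into the resulting complement) is indeed the standard one, but your bookkeeping is wrong in a way that breaks the proof rather than being a mere labelling slip.

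Concretely: with the paper's convention, the first exchange gives $B=A_1\oplus(\oplus_i C_i^{(1)})$ with $B_i=C_i^{(1)}\oplus D_i^{(1)}$, so the complement of $A_1$ is $\oplus_i C_i^{(1)}$, whereas $\oplus_i D_i^{(1)}$ is merely \emph{isomorphic} to $A_1$ (Remark \ref{rem:KS-first}), not a complement of it. You then exchange $A_2$ against $\oplus_i D_i^{(1)}$ --- the wrong object, into which $A_2$ need not embed as a summand at all --- and you conclude with $B=A\oplus\bigl(\oplus_i(C_i^{(1)}\oplus C_i^{(2)})\bigr)$, i.e.\ a complement of $A$ that is \emph{larger} than the complement of $A_1$. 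This cannot happen: take $B=B_1=A=A_1\oplus A_2$ with $u=1_A$; a legitimate output of the first step is $C_1^{(1)}=A_2$, $D_1^{(1)}=A_1$, yet the correct final answer is $C_1=0$, $D_1=B_1$, so no $C_1\supseteq C_1^{(1)}$ can work when $A_2\neq 0$. The correct second step is: letting $\pi:B\to\oplus_i C_i^{(1)}$ be the projection along $A_1$, the composite $\pi u_2:A_2\to\oplus_i C_i^{(1)}$ is a split monomorphism (a retraction is $p_2u_Y$, where $u_Y$ is the inclusion of $\oplus_i C_i^{(1)}$ and $p_2$ the projection of $B$ onto $A_2$); applying the exchange property of $A_2$ to $\pi u_2$ and the decomposition $\oplus_i C_i^{(1)}$ yields refinements $C_i^{(1)}=E_i\oplus F_i$ with $\oplus_i C_i^{(1)}=A_2\oplus(\oplus_i E_i)$, hence $B=A_1\oplus A_2\oplus(\oplus_i E_i)$ after replacing the inclusion $u_Y\pi u_2=u_2-u_1q_1u_2$ of $A_2$ by $u_2$ via a triangular automorphism of $A_1\oplus A_2\oplus(\oplus_i E_i)$. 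The required refinement is then $B_i=E_i\oplus(F_i\oplus D_i^{(1)})$, with the complement $\oplus_i E_i$ \emph{shrinking} inside $\oplus_i C_i^{(1)}$. As written, your proof of (a) does not go through.
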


\begin{proof}
a) This is proved in \cite[Lemma 6]{WW}.

b) From \cite[Lemma 2.4]{Fac}, in every module category, the class of all modules with the finite exchange property is closed with respect to direct summands. Therefore, if $e$ is an idempotent in an exchange ring $E$ then $eEe$ is also an exchange ring. Applying Proposition \ref{prop:finite-exchange}, the conclusion is now obvious.
\end{proof}

\begin{corollary}\label{cor:ds-sum-locale}\label{thm:ks-sd-locale}
Suppose that $\CC$ is an idempotent complete additive category. Let $X=\bigoplus_{i=1}^n A_i=B\oplus C$ be an object in $\CC$ such that $B$ is indecomposable. 
If the rings $\End(A_i)$ are local then there exists $j\in\{1,\dots,n\}$  such that $X=B\oplus\left(\bigoplus_{i\neq j}A_i\right)$ (hence $B\cong A_j$).    \end{corollary}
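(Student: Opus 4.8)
The plan is to exploit the exchange property of $B$ together with the indecomposability of $B$ and of the summands $A_i$. The first task is to make the finite exchange property available for $B$, which we are handed merely as an abstract indecomposable direct summand of $X$. Since each $A_i$ is indecomposable (its endomorphism ring being local) and $\End(A_i)$ is local, Proposition~\ref{prop:finite-exchange-prop} shows that every $A_i$ has the finite exchange property; hence $X=\bigoplus_{i=1}^n A_i$ has the finite exchange property by Proposition~\ref{prop:finite-exchange-prop-ds}(a), and therefore so does its direct summand $B$ by Proposition~\ref{prop:finite-exchange-prop-ds}(b).

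Next I would apply this exchange property of $B$ to the finite decomposition $X=\bigoplus_{i=1}^n A_i$ and to a split monomorphism $\iota\colon B\to X$ witnessing that $B$ is a direct summand of $X$: we obtain decompositions $A_i=C_i\oplus D_i$ with $X=B\oplus\bigl(\bigoplus_{i=1}^n C_i\bigr)$, where the monomorphism on the first summand is $\iota$ and the one on $C_i$ is induced from the inclusions. As recorded in Remark~\ref{rem:KS-first}, $B$ and $\bigoplus_{i=1}^n D_i$ are then both complements of $\bigoplus_{i=1}^n C_i$ in $X$, so $B\cong\bigoplus_{i=1}^n D_i$. Because $B$ is indecomposable and nonzero, at most one of the $D_i$ can be nonzero (two nonzero ones would give a nontrivial decomposition of $B$); hence there is an index $j$ with $D_i=0$, equivalently $C_i=A_i$, for all $i\ne j$, while $D_j\cong B$.

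Substituting back, this says $X=B\oplus C_j\oplus\bigl(\bigoplus_{i\ne j}A_i\bigr)$ with $A_j=C_j\oplus D_j$ and $D_j\cong B\ne 0$. Now I would invoke the hypothesis on $A_j$: since $\End(A_j)$ is local, $A_j$ is indecomposable, and as $D_j\ne 0$ this forces $C_j=0$ and $A_j=D_j\cong B$. Feeding $C_j=0$ into the displayed decomposition gives $X=B\oplus\bigl(\bigoplus_{i\ne j}A_i\bigr)$, which is exactly the assertion; comparing this with $X=A_j\oplus\bigl(\bigoplus_{i\ne j}A_i\bigr)$ (or simply using $A_j=D_j\cong B$) yields $B\cong A_j$.

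I expect the only delicate point to be the first paragraph, namely securing the finite exchange property for the given indecomposable summand $B$ by routing through $X$; everything afterwards is short bookkeeping that uses indecomposability twice — once to collapse $\bigoplus_i D_i$ onto a single $D_j\cong B$, and once to see that $D_j$ exhausts $A_j$. As an alternative to the second paragraph one could quote Corollary~\ref{cor:direct-summand-local} directly and then finish as in the third paragraph, but one would still need to check that the summand it produces is isomorphic to $B$, which the complement argument above supplies.
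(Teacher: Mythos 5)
Your proof is correct, and its first half coincides with the paper's: both establish the finite exchange property for $B$ by giving it to each $A_i$ via Proposition~\ref{prop:finite-exchange-prop}, passing to the finite sum $X$ by Proposition~\ref{prop:finite-exchange-prop-ds}(a), and descending to the summand $B$ by Proposition~\ref{prop:finite-exchange-prop-ds}(b). You then diverge slightly in how you finish. The paper converts the finite exchange property of the indecomposable $B$ back into the statement that $\End(B)$ is local (Proposition~\ref{prop:finite-exchange-prop} again) and invokes Corollary~\ref{cor:direct-summand-local}, which hands it the index $j$ and the splitting $A_j=A_j'\oplus D$; indecomposability of $A_j$ is then still needed, implicitly, to kill $A_j'$. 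You instead apply the exchange property of $B$ directly to the decomposition $X=\bigoplus_{i=1}^n A_i$, use Remark~\ref{rem:KS-first} to get $B\cong\bigoplus_i D_i$, and carry out the bookkeeping with the two indecomposability hypotheses explicitly. The two routes are essentially the same length; yours has the small advantage of being self-contained about why the surviving summand $A_j$ is isomorphic to $B$ and why no residual piece $C_j$ remains, details the paper's one-line proof leaves to the reader (and which you correctly flag in your closing remark about the alternative via Corollary~\ref{cor:direct-summand-local}).
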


\begin{proof}
We use Proposition \ref{prop:finite-exchange-prop-ds} to observe that $B$ has the finite exchange property. Since it is indecomposable. From Proposition \ref{prop:finite-exchange-prop}, it follows that $\End(B)$ is local. Now we apply Corollary \ref{cor:direct-summand-local}.    
\end{proof}





{

In order to provide a sufficient condition for objects with local endomorphism rings to have the exchange property, we consider a generalized version of the weak Grothendieck condition introduced in \cite{WW}.

\begin{definition}
Let $\CC$ be an additive category. We will say that an object $B$ from $\CC$ satisfies a \textit{non-split Grothendieck condition} if for every direct decomposition $X=\bigoplus_{i\in I}A_i$ and for every split monomorphism $u:B\to X$ there exists a finite subset $J\subseteq I$ such that the morphism $[u,u_J]:B\oplus X(J)\to X$ is not a split monomorphism.   
\end{definition}

In order to give some examples, we need the following lemma.

\begin{lemma}\label{lem:non-split}
Let $\CC$ be an additive category. If $u:B\to X$ and $v:D\to X$ are morphisms in $\CC$ such that there exist $f:C\to B$ and $g:C\to D$ with $uf=vg\neq 0$ then $[u,v]:B\oplus D\to X$ is not a split monomorphism.    
\end{lemma}

\begin{proof}
Suppose that $[u,v]$ is a split monomorphism, it follows that there exist $p:X\to B$ and $q:X\to D$ such that for the induced morphism $\left[\begin{array}{c}
    p \\ q
\end{array}\right]:X\to B\oplus D$ we have $\left[\begin{array}{c}
    p \\ q
\end{array}\right]\left[u,v\right]=1_{B\oplus D}$. Then  $qu=0$ and $qv=1_{D}$. It follows that $g=qvg=quf=0$, a contradiction.   
\end{proof}

\begin{example}\label{rem:weakG-is-nonsplitG}
Every object that satisfies a weak Grothendieck condition used in \cite{WW} also satisfies a non-split Grothendieck condition.
%
\end{example}

\begin{proof}
Recall from \cite[p. 353]{WW} that an object $B\neq 0$ satisfies a weak Grothendieck condition if for every monomorphism $u:B\to X$, with $X=\bigoplus_{i\in I}A_i$, there exist a non-zero morphism $f:C\to B$, a finite subset $J\subseteq I$ and a (non-zero) morphism $g:C\to X(J)$ such that $uf=u_Jg$. In these conditions, it follows, from Lemma \ref{lem:non-split}, that the morphism $[u,u_J]:B\oplus X(J)\to X$ is not a split monomorphism. 
\end{proof}

\begin{definition}
An object $C$ in an additive category $\CC$ with direct sums is called \textit{compact} if for every morphism $f:C\to \bigoplus_{i\in I}A_i$ there exists a finite subset $J\subseteq I$ such that $f$ factorizes through $u_J$. 
\end{definition}

\begin{example}
Suppose that $\CC$ is an additive category and that $B$ is an object such that there exists a non-zero morphism $C\to B$ with $C$ a compact object. Then $B$ satisfies a non-split Grothendieck condition. In particular, every finitely approximable non-zero object satisfies a non-split Grothendieck condition. \end{example}

\begin{proof}
For the first statement, consider a split monomorphism $u:B\to X$, with $X=\bigoplus_{i\in I}A_i$. If $f:C\to B$ is a non-zero morphism and $C$ is compact, it follows that $uf$ factorizes through a canonical morphism $u_J:X(J)\to X$ with $X$ a finite set. The conclusion follows from Lemma \ref{lem:non-split}. For the second one, it is enough to observe that in the proof of \cite[Lemma 4]{WW} it is proved that if $B\neq 0$ is a finitely approximable object, there exists a compact object $C$ and a non-zero morphism $C\to B$.    
\end{proof}



We need the following lemma.

\begin{lemma}\label{lem:M+N=K+L}
Let $\CC$ be an idempotent complete category. Suppose that $X$ is an object of $\CC$, and that $X=M\oplus N=K\oplus L$ are direct decompositions for $X$, with the canonical monomorphisms $u_M:M\to X$ and $u_K:K\to X$, such that $M$ is indecomposable and $[u_M,u_K]:M\oplus K\to X$ is not a split monomorphism.
\begin{enumerate}[{\rm a)}]
    \item If $\End(M)$ is a local ring and $K=\bigoplus_{j\in J}K_j$, with $J$ a finite set, then there exist split monomorphisms $v_j:K'_j\to K_j $, $j\in J$, such that $(u_M,(u_Kv_j)_{j\in J},u_L)$ represent a direct decomposition for $X$.
    \item If $K$ has the finite exchange property then $M$ is isomorphic to a direct summand of $K$. 
\end{enumerate}
\end{lemma}

\begin{proof}
 a) Using Proposition \ref{prop:finite-exchange-prop}, we observe that $M$ has the finite exchange property. It follows that there are direct decompositions $K_j=K'_j\oplus K''_j$ and $L=L_1\oplus L_2$ (with canonical monomorphisms $v_j:K'_j\to K$ and $w_i:L_i\to L$, $i=1,2$) such that we have a direct decomposition $X=M\oplus (\bigoplus_{j\in J} K'_j)\oplus L_1$ given by the canonical morphisms $u_M$, $u_Kv_j$, and $u_Lw_1$ (here $u_L:L\to X$ is the canonical monomorphism). Since the morphisms $u_Kv_j$ and $u_Lw_1$ are also the canonical split monomorphism associated to the direct decomposition $X=(\bigoplus_{j\in J}K'_j)\oplus (\bigoplus_{j\in J}K''_j)\oplus L_1\oplus L_2$, it follows that there exists an isomorphism $M\cong (\bigoplus_{j\in J}K''_j)\oplus L_2$. Since $M$ is indecomposable, we obtain $\bigoplus_{j\in J}K''_j=0$ or $L_2=0$. If $(\bigoplus_{j\in J}K''_j)=0$,  it follows that $[u_M, u_K]$ is a split monomorphism, a contradiction. Hence, $L_2=0$ and the conclusion is obvious.

 b) Since $K$ has the finite exchange property, there exist direct decompositions $M=M_1\oplus M_2$ and $N=N_1\oplus N_2$ such that $X=K\oplus M_1\oplus N_1$. As in the proof of a), if $M_2=0$, it follows that $[u_M,u_K]$ is a split monomorphism, a contradiction. Therefore, $M_1=0$, hence $X= K\oplus N_1=M\oplus N_1\oplus N_2$. In these decompositions the canonical monomorphisms associated to $N_1$ are the same, hence $K\cong M\oplus N_2$.  
\end{proof}

The following result generalizes \cite[Proposition 2]{WW}.

\begin{proposition}\label{prop:fin-ex-global}
Let $\CC$ be an idempotent complete additive category. If $A\in \CC$  satisfies a non-split Grothendieck condition and the endomorphism ring of $A$ is local then $A$ has the exchange property.    
\end{proposition}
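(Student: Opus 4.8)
The plan is to bootstrap from the finite exchange property to the full exchange property, using the compactness hypothesis---through Lemma~\ref{cor:comp-gen}---to replace every infinite ambient decomposition by a finite one. First note that a local ring has only the idempotents $0$ and $1$ and is nonzero, so $A$ is indecomposable and $A\ne 0$; hence by Proposition~\ref{prop:finite-exchange-prop} the object $A$ has the finite exchange property. Now fix a direct decomposition $B=\oplus_{i\in I}B_i$ with canonical monomorphisms $u_i$, and a split monomorphism $u\colon A\to B$; since $A\ne 0$ we have $u\ne 0$. The first move is to apply Lemma~\ref{cor:comp-gen} to $u$ and this decomposition, producing a finite subset $J\subseteq I$ for which $(u,u_J)\colon A\oplus B(J)\to B$ is \emph{not} a split monomorphism. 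Next, regard $B=\bigl(\oplus_{j\in J}B_j\bigr)\oplus B(I\setminus J)$ as a \emph{finite} direct decomposition (it has $|J|+1$ summands) and feed the split monomorphism $u$ into the finite exchange property of $A$. This yields decompositions $B_j=C_j\oplus D_j$ with canonical monomorphisms $v_j$ (for $j\in J$) and $B(I\setminus J)=C_0\oplus D_0$ with canonical monomorphism $v_0$, together with a decomposition $B=A\oplus\bigl(\oplus_{j\in J}C_j\bigr)\oplus C_0$ in which $A$ is embedded by the given map $u$.

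The crux is to show that $D_0=0$. By Remark~\ref{rem:KS-first} one has $A\cong\bigl(\oplus_{j\in J}D_j\bigr)\oplus D_0$, so, $A$ being indecomposable and nonzero, exactly one of the summands $D_j$ ($j\in J$), $D_0$ is nonzero. If that summand were $D_0$, then $D_j=0$, and hence $v_j$ is an isomorphism, for every $j\in J$; identifying $C_j$ with $B_j$ along $v_j$, the decomposition above becomes $B=A\oplus B(J)\oplus C_0$, and the projection of $B$ onto $A\oplus B(J)$ along $C_0$ retracts $(u,u_J)\colon A\oplus B(J)\to B$, making it a split monomorphism---contradicting the choice of $J$. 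Therefore $D_0=0$, i.e.\ $C_0=B(I\setminus J)=\oplus_{i\in I\setminus J}B_i$. To finish, set $C_i:=B_i$ and $D_i:=0$ for $i\in I\setminus J$; then $B_i=C_i\oplus D_i$ for every $i\in I$ and $B=A\oplus\bigl(\oplus_{i\in I}C_i\bigr)$ with the canonical monomorphisms $u$ and $v_iu_i$, which is precisely the exchange property of $A$ with respect to $B=\oplus_{i\in I}B_i$.

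All of the conceptual content lies in the first step, since Lemma~\ref{cor:comp-gen} is what legitimizes replacing the infinite family $\{B_i\}_{i\in I}$ by the finite decomposition $\bigl(\oplus_{j\in J}B_j\bigr)\oplus B(I\setminus J)$; after that, the finite exchange property of $A$ does everything. I expect the only delicate part to be the bookkeeping: transporting the decomposition produced by the finite exchange property back to the original indexing while keeping the summand $A$ embedded via the prescribed monomorphism $u$, and checking that a subfamily of a finite direct decomposition induces a split monomorphism (which is what yields the contradiction in the middle paragraph).
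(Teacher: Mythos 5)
Your proof is correct and follows essentially the same strategy as the paper's: Lemma~\ref{cor:comp-gen} supplies the finite subset $J$ with $(u,u_J)$ non-split, and the finite exchange property of $A$ (Proposition~\ref{prop:finite-exchange-prop}) together with indecomposability does the rest. The only difference is organizational. The paper first invokes Lemma~\ref{lem:M+N=K+L}~i) to conclude that $(u,u_{I\setminus J})$ splits, hence that $u$ factors through $u_J$ as a split monomorphism $\alpha:A\to X(J)$, and then applies the finite exchange property to $\alpha$ inside the finite sum $\oplus_{j\in J}B_j$; you instead apply the finite exchange property directly to $u$ with respect to the coarser finite decomposition $\bigl(\oplus_{j\in J}B_j\bigr)\oplus B(I\setminus J)$ and argue that the lump summand is left untouched ($D_0=0$). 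Your middle paragraph is in effect the proof of Lemma~\ref{lem:M+N=K+L}~i) inlined, and your variant has the mild advantage of bypassing the paper's intermediate assertion that $p_{I\setminus J}u=0$, which is stated rather quickly there.
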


\begin{proof}
Consider a split monomorphism $u:A\to \bigoplus_{i\in I}B_i$. Using the hypothesis, we observe that there exists a finite subset $J\subseteq I$ such that $[u,u_J]$ does not split. If we apply Lemma \ref{lem:M+N=K+L} a) for $K=X(J)$ and $L=X(I\setminus J)$, we obtain the conclusion.
\end{proof}

In the following example, we will use an idea from \cite[Example 2.1]{Fr} to prove that Proposition \ref{prop:fin-ex-global} does not hold for arbitrary objects in idempotent complete categories.

\begin{example}\label{ex:idempotent complete-local-non-exchange}
Let $\Ab$ be the category of all abelian groups. If $p$ is a prime, we denote by 
$$\widehat{\Z}_p=\{(a_n+\Z/p^n\Z)_{n>0}\mid a_{n+1}-a_n\in p^n\Z\}$$ the group of $p$-adic integers. Following the same strategy as in the proof of \cite[Lemma 6.4.6]{fuchs-15}, we will prove that $\widehat{Z}_p$ is a pure subgroup in $\prod_{n>0}\Z/p^n\Z$. Recall from \cite[Section 5.1]{fuchs-15} that a subgroup $H$ of $G\in \Ab$ is pure if every positive integer $m$ we have $mH=H\cap(mG)$. Since our groups are $q$-divisible for all $q$ with $\gcd(q,p)=1$, we have to verify the above equality only for the case $m=p^k$ with $k$ an arbitrary positive integer. In this case, it is easy to see that a $p$-adic integer $a=(a_n+\Z/p^n\Z)_{n>0}$ does not belong to $p^k\widehat{\Z}_p$ if and only if there exists an integer $n>0$ such that $a_n+\Z/p^n\Z\notin p^k(\Z/p^n\Z)$. Therefore, if $a\notin p^k\widehat{\Z}_p$ then $a\notin p^k\prod_{n>0}\Z/p^n\Z$. It follows that $\widehat{\Z}_p$ is a pure subgroup of $\prod_{n>0}\Z/p^n\Z$. But $\widehat{\Z}_p$ is pure-injective (see \cite[Example 6.4.3]{fuchs-15}), hence there exists $u:\prod_{n>0}\Z/p^n\Z\to \widehat{\Z}_p$ such that $u_{|\widehat{\Z}_p}=1_{\widehat{\Z}_p}$. We note, for the reader's convenience, that a similar conclusion can be obtained by using \cite[Theorem 6.4.7]{fuchs-15}, but in this case the direct product also contains copies of the Pr\"ufer group $\Z(p^\infty)$.   

If we denote by $u^{op}$ the morphism induced by $u$ in the opposite category $\Ab^{op}$, it follows that $u^{op}$ is a split monomorphism in $\Ab^{op}$ from $\widehat{\Z}_p$ to the direct sum (in $\Ab^{op}$) of the family of indecomposable objects $(\Z/p^n\Z)_{n>0}$. Since the group of $p$-adic integers is not isomorphic (in $\Ab$) to a direct product of cyclic $p$-groups, it follows that $\widehat{\Z}_p$ does not have the exchange property in $\Ab^{op}$. However, the endomorphism ring of $\widehat{\Z}_p$ is the (commutative) ring of all $p$-adic integers, hence it is local.  
\end{example}

\begin{remark}\label{rem:non-exchange-local}
This example provides a negative answer to the open question of whether, in bounded lattices (in particular, in the subobject lattice of an object in abelian categories), every element with the finite exchange property also has the full exchange property, as discussed in \cite[p. 61]{HR}. The category $\Ab^{op}$ contains a generator, the injective cogenerator $\BBQ/\Z$ in $\Ab$, and satisfies the Grothendieck conditions AB4 and AB4$^\star$, but it is not a Grothendieck category. The question remains unresolved in Grothendieck categories, particularly in module categories, and we refer to \cite{IGS} for recent progress in the study of the exchange property in module categories. 
\end{remark}
}

\section{The Krull-Remak-Schmidt-Azumaya Theorem in Additive Categories}





We are ready to prove the main result of this note, a (uniqueness) Krull-Remak-Schmidt-Azumaya Theorem for idempotent complete categories.


\begin{theorem}\label{thm:ksa-infinite}
Suppose that $\CC$ is an idempotent complete additive category. 
Let $(A_i)_{i\in I}$ be a family of objects from $\CC$ with local endomorphism rings.
\begin{enumerate}[{\rm a)}]
    \item  If $B$ is an indecomposable direct summand of $\bigoplus_{i\in I}A_i$ {which satisfies a non-split Grothendieck condition} then there exists $i\in I$ such that $B\cong A_i$.
    \item If $\bigoplus_{i\in I}A_i\cong \bigoplus_{j\in J}B_j$, all objects $B_j$ are indecomposable, and all objects $A_i$ $(i\in I)$ and $B_j$ $(j\in J)$ satisfy non-split Grothendieck conditions, then there exists a bijection $\sigma:I\to J$ such that for every $i\in I$ we have $A_i\cong B_{\sigma(i)}$.
\end{enumerate}
\end{theorem}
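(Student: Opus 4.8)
The plan is to use the tools assembled in the previous sections, reducing both statements to the local/finite-exchange machinery of Corollary \ref{cor:ds-sum-locale} and Lemma \ref{lem:M+N=K+L}, together with the compactness input from Lemma \ref{cor:comp-gen}. For part (a), write $X=\oplus_{i\in I}A_i$ with canonical monomorphisms $u_i:A_i\to X$, and let $u:B\to X$ be a split monomorphism with $B$ indecomposable. By Lemma \ref{cor:comp-gen} there is a finite subset $J\subseteq I$ such that $(u,u_J):B\oplus X(J)\to X$ does not split. Since $\End(B)$ is local, $B$ has the finite exchange property (Proposition \ref{prop:finite-exchange-prop}), so we may apply Lemma \ref{lem:M+N=K+L} i) with $M=B$, $K=X(J)$, and $L=X(I\setminus J)$ to conclude that $(u,u_{I\setminus J}):B\oplus X(I\setminus J)\to X$ is a split monomorphism. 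Hence the projection $p_{I\setminus J}u=0$, so $u$ factors through $u_J$ as $u=u_J\alpha$; because $\CC$ is idempotent complete and $u$ is a split monomorphism, $\alpha:B\to X(J)$ is itself a split monomorphism. Now $X(J)=\oplus_{j\in J}A_j$ is a \emph{finite} direct sum of objects with local endomorphism rings and $B$ is indecomposable, so Corollary \ref{cor:ds-sum-locale} yields $j\in J$ with $B\cong A_j$.

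For part (b), fix an isomorphism giving $X=\oplus_{i\in I}A_i=\oplus_{j\in J}B_j$ (identifying both sides with $X$), with the $B_j$ indecomposable. First I would observe that each $B_j$ has local endomorphism ring: by part (a) applied to the decomposition $\oplus_{i\in I}A_i$, each indecomposable summand $B_j$ is isomorphic to some $A_i$, hence has local endomorphism ring. Symmetrically, by part (a) applied to $\oplus_{j\in J}B_j$, each $A_i$ is isomorphic to some $B_j$. The task is then to upgrade this ``each is isomorphic to one of the others'' into a genuine bijection $\sigma$. The standard route is to partition $I$ (and $J$) by isomorphism type: for an isomorphism class $[T]$, let $I_{[T]}=\{i\in I: A_i\cong T\}$ and $J_{[T]}=\{j\in J: B_j\cong T\}$, and prove $|I_{[T]}|=|J_{[T]}|$ for every class $[T]$; choosing a bijection on each block and amalgamating gives $\sigma$.

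The cardinality equality splits into the finite and infinite cases. When $I_{[T]}$ is finite, the argument is essentially the one in \cite[Theorem 1]{WW} (or a short induction using Corollary \ref{cor:ds-sum-locale}): repeatedly exchange summands to show that a copy of $T$ in one decomposition can be traded for a copy of $T$ in the other, peeling off matched pairs and keeping track that no block can strictly dominate. When $I_{[T]}$ is infinite, the classical Azumaya-style counting argument applies: for each $i\in I_{[T]}$ the split monomorphism $u_i:A_i\to X$ composed with the projections to the $B_j$ must be ``carried'' by the $j$ with $B_j\cong T$, and a short set-theoretic bound (each $B_j$ receives only finitely many — or at least boundedly many — such $A_i$ in an essential way, via the compactness-flavored Lemma \ref{cor:comp-gen}) forces $|I_{[T]}|\le |J_{[T]}|$; by symmetry and the Cantor--Schröder--Bernstein theorem, equality holds. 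I expect the main obstacle to be the infinite case: making the counting rigorous requires checking that the relevant ``support'' relation between the $A_i$ and the $B_j$ is, on each isomorphism block, close enough to a finite-to-one correspondence, which is exactly where the hypothesis of enough compact objects (through Lemma \ref{cor:comp-gen}) must be invoked carefully rather than the naive Grothendieck-lattice argument that fails in $\Ab^{op}$.
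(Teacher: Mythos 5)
Your argument for part (a) contains a false step. After concluding from Lemma \ref{lem:M+N=K+L} i) that $(u,u_{I\setminus J}):B\oplus X(I\setminus J)\to X$ is a split monomorphism, you infer that $p_{I\setminus J}u=0$ and hence that $u$ factors through $u_J$. That inference is invalid: the lemma yields a decomposition $X=B\oplus K_1\oplus X(I\setminus J)$ for some summand $K_1$ of $X(J)$, but the projection onto $X(I\setminus J)$ in this \emph{new} decomposition need not coincide with $p_{I\setminus J}$ (whose kernel is $X(J)$), and $u$ need not land in $X(J)$. Concretely, in $\Ab$ take $X=\Z/4\oplus\Z/2=A_1\oplus A_2$ and let $B$ be the (indecomposable, local) direct summand generated by $(1,1)$, with $J=\{1\}$: then $(u,u_1)$ does not split, $(u,u_2)$ does split, yet $p_2u\neq 0$ and $u$ does not factor through $u_1$. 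The paper's proof of (a) avoids this entirely by using part ii) of Lemma \ref{lem:M+N=K+L}: since $X(J)$ is a finite direct sum of objects with local endomorphism rings, it has the finite exchange property (Propositions \ref{prop:finite-exchange-prop} and \ref{prop:finite-exchange-prop-ds}(a)), so $B$ is \emph{isomorphic to} a direct summand of $X(J)$ --- which is all that Corollary \ref{cor:ds-sum-locale} requires --- without any claim that $u$ itself factors through $u_J$. You should replace your factorization step by this application of Lemma \ref{lem:M+N=K+L} ii).

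For part (b) your outline agrees with the paper's in its first half (each $B_j$ is isomorphic to some $A_i$ and conversely, then compare cardinalities blockwise), but both cases are only sketched. In the finite case the paper does not induct with Corollary \ref{cor:ds-sum-locale} (which handles only finite index sets $I$); it uses the \emph{full} exchange property of $B_{j_0}$ (Proposition \ref{prop:fin-ex-global}, which is precisely where the hypothesis of enough compact objects enters) together with Remark \ref{rem:KS-first} to replace some $A_{i_0}$ by $B_{j_0}$ inside the possibly infinite decomposition $\oplus_{i\in I}A_i$, and then inducts on $|J_k|$. Your infinite-case counting argument is stated as an expectation rather than carried out; the paper defers exactly this step to \cite[Theorem 3]{WW}. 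As written, part (a) has a genuine error and part (b) is a plan rather than a proof.
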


\begin{proof}
The proof follows the classical proof presented in \cite{WW}. We give some details for the reader's convenience. We will use the notations described in Notation \ref{notation-direct-sums-summands} for $X=\bigoplus_{i\in I}A_i$.

a) Suppose that $u_B:B\to X$ is a split monomorphism, and fix a finite subset $J\subseteq I$ such that $[u_B,u_J]:B\oplus X(J)\to X$ is not a split monomorphism. From Proposition \ref{prop:finite-exchange-prop} it follows that all objects $A_i$ have the finite exchange property.Therefore, using Proposition \ref{prop:finite-exchange-prop-ds}, we conclude that $X(J)$ has also the finite exchange property. From Lemma \ref{lem:M+N=K+L} b), it follows that $B$ is isomorphic to a direct summand of $X(J)$. 
The conclusion follows from Corollary \ref{thm:ks-sd-locale}.

b) 
We can assume that we have two direct decompositions $X=\bigoplus_{i\in I}A_i= \bigoplus_{j\in J}B_j$, and we denote by $u_i:A_i\to X$, $i\in I$, and $v_j:B_j\to X$, $j\in J$, the canonical morphisms which induce these decompositions. 

From a) it follows that for every $j\in J$ there exists $i\in I$ such that $A_i\cong B_j$. 
It follows that all objects $B_j$ have local endomorphism rings. Consequently, we can apply Proposition \ref{prop:finite-exchange-prop} to conclude that all objects $A_i$ and  $B_j$ have the exchange property. 

Using again a) we conclude that for every $i\in I$ there exists $j\in J$ such that $B_j\cong A_i$.  
It follows that we have a bijection between the sets $\{[A_i]\mid i\in I\}$ and $\{[B_j]\mid j\in J\}$, where $[Y]$ denotes the isomorphism class of $Y$. 

We obtain two partitions $I=\bigcup_{k\in K}I_k$ and $J=\bigcup_{k\in K}J_k$ with the following properties: 
\begin{itemize}
\item two objects $A_i$ and $A_{i'}$ are isomorphic if and only if there exists $k\in K$ such that $i,i'\in I_k$;
\item two objects $B_j$ and $B_{j'}$ are isomorphic if and only if there exists $k\in K$ such that $j,j'\in J_k$;
\item if $k\in K$, $i\in I_k$, $j\in J_k$ then $A_i\cong B_j$.
\end{itemize}

We want to prove that for all $k\in K$ we have $|I_k|=|J_k|$. It is enough to prove that $|J_k|\leq |I_k|$.

\textsl{Case 1. $J_k$ is finite.} For this case we use an induction on $|J_k|$ to prove directly that $|J_k|\leq |I_k|$. For $|J_k|=0$ the inequality is obvious. If $j_0\in J_k$, since $B_{j_0}$ has the exchange property, we can apply Remark \ref{rem:KS-first} to observe that there exists $i_0\in I_k$ such that $(v_{j_0},(u_i)_{i\neq i_0})$ represents a direct decomposition of $X$. Then $\bigoplus_{i\in I\setminus\{i_0\}}A_i\cong \bigoplus_{j\in J\setminus\{j_0\}}B_j$, and we can apply the induction hypothesis.

\textsl{Case 2. $J_k$ is infinite.} Using a strategy similar to that used in Case 1, we observe that $I_k$ is infinite. We leave the remaining proof to the reader, since it is identical with that presented in \cite[Theorem 3]{WW}.
\end{proof}

\begin{definition}
The category $\CC$ is said to have \textit{enough compact objects} if for every non-zero object $B\in \CC$ there exist a compact object $C\in \CC$ and a non-zero morphism $C\to B$.
\end{definition}

\begin{definition} Recall that a triangulated category $\CC$ is \textit{compactly generated} if it has direct sums, has enough compact objects, and the isomorphism classes of the compact objects form a set. 
\end{definition}

\begin{corollary}\label{cor:ksa-enough-compact}\label{cor:triang}
Suppose that $\CC$ is an idempotent complete category with enough compact objects and that $(A_i)_{i\in I}$ is a family of objects from $\CC$ with local endomorphism rings.    
\begin{enumerate}[{\rm a)}]
    \item  For every indecomposable direct summand $B$ of $\bigoplus_{i\in I}A_i$ there exists $i\in I$ such that $B\cong A_i$.
    \item If $\bigoplus_{i\in I}A_i\cong \bigoplus_{j\in J}B_j$ and all objects $B_j$ are indecomposable then there exists a bijection $\sigma:I\to J$ such that for every $i\in I$ we have $A_i\cong B_{\sigma(i)}$.
\end{enumerate}
\end{corollary}

\begin{remark}
It follows from \cite[Proposition 1.6.8]{Ne} that every compactly generated triangulated category is idempotent complete. Hence, the hypothesis of Corollary \ref{cor:ksa-enough-compact} holds whenever $\CC$ is a compactly generated triangulated category.     
\end{remark}


We close the paper with an example that shows that Theorem \ref{thm:ksa-infinite} does not hold in general for categories without enough compact objects. This is based on \cite[Example 2.1]{Fr}. 

\begin{example}\label{ex:non-HSA-general-additive} If $p$ is a prime then the group of $p$-adic integers $\widehat{\Z}_p$ is the inverse limit of the system of the canonical epimorphisms $p_n:\Z/p^{n+1}\Z\to \Z/p^{n}\Z$, $n>0$. In fact, we have a short exact sequence 
$$0\to \widehat{\Z}_p\to \prod_{n>0 }\Z/p^{n}\Z\overset{\pi}\to \prod_{n> 0}\Z/p^{n}\Z\to 0,$$
where $(a_n+p^n\Z)_n\overset{\pi}\mapsto (a_{n+1}-a_n+p^n\Z)_n$. From Example \ref{ex:idempotent complete-local-non-exchange} it follows that this sequence splits, hence we have in $\Ab$ an isomorphism $\widehat{\Z}_p\times \prod_{n> 0}\Z/p^{n}\Z\cong \prod_{n> 0}\Z/p^{n}\Z$. Looking at this isomorphism in $\Ab^{op}$, we conclude that the statements a) and b) from Theorem \ref{thm:ksa-infinite} are not true in general idempotent complete categories.    
\end{example}

\subsection*{Funding} The revised version of this paper is supported by a grant of the Ministry of Research, Innovation
and Digitization, CNCS - UEFISCDI, project number PN-IV-P1-PCE-2023-0060, within PNCDI IV.


\begin{thebibliography}{99}
\bibitem{Azu}  G. Azumaya, Corrections and supplementaries to my paper concerning Krull-Remak-Schmidt's theorem, Nagoya Math. J. 1 (1950), 117--124.

\bibitem{Bass} H. Bass, Algebraic $K$-theory, Mathematics Lecture Note Series, New York-Amsterdam: W. A. Benjamin, Inc., 1968.


\bibitem{BR} S. Breaz, C. Rafiliu, Comparing $\Add(M)$ with $\Prod(M)$, J. Algebra 682 (2025), 804--823.

\bibitem{CJ} P. Crawley and B. J\"onsson, Refinements for infinite direct decompositions of algebraic
systems, Pac. J. Math. 14 (1964), 797--855.

\bibitem{Fac} A. Facchini, Module Theory. Endomorphism Rings and Direct Sum Decompositions in Some Classes of Modules, Progr. Math., vol. 167, Birkhäuser, Basel, 1998.


\bibitem{Fr} D Franeti\v c, A Krull–Schmidt theorem for infinite products of modules, J. Algebra 407
(2014), 307--315.

\bibitem{fuchs-15} L. Fuchs, Abelian groups.
Springer Monographs in Mathematics. Cham: Springer ISBN 978-3-319-19421-9/hbk; 978-3-319-19422-6/ebook (2015).

\bibitem{HR} E.J. Hanson, D.J. Rock, 
Composition series of arbitrary cardinality in modular lattices and abelian categories,
Adv. Math. 433 (2023), Article ID 109292, 68 pp. 

\bibitem{IGS} M. Cort\' es-Izurdiaga, P. Guil Asensio, A. Srivastava, Flat cotorsion modules are exchange, Bull. Lond. Math. Soc. 57 (2025), 1897--1907. 

\bibitem{kr-ks} H. Krause, Krull–Schmidt categories and projective covers. Expo. Math. 33 (2015), 535--549.

\bibitem{Mi} B. Mitchell, Theory of Categories, Academic Press, New-York, 1965.

\bibitem{Monk} G. S. Monk, A characterization of exchange rings, Proc. Amer. Math. Soc. 35 (1972),
349--353.

\bibitem{NV} G. Naisse and P. Vaz. An approach to categorification of Verma modules. Proc. Lond. Math. Soc. (3) 117 (2018), 1181--1241.

\bibitem{Ne} A. Neeman, Triangulated categories. Annals of Mathematics Studies, 148, Princeton University Press, 2001.

\bibitem{Ni} W. K. Nicholson, Lifting idempotents and exchange rings, Trans. Amer. Math. Soc. 229
(1977), 269--278.



\bibitem{WW} C.L. Walker, R.B. Warfield Jr., Unique decomposition and isomorphic refinement theorems in additive
categories, J. Pure Appl. Algebra 7 (1976), 347--359.

\bibitem{war} R.B. Warfield, Jr.,
The structure of mixed Abelian groups. 
Abelian Group Theory, Proc. 2nd New Mex. State Univ. Conf., Las Cruces 1976, Lect. Notes Math. 616 (1977), 1--38.


\bibitem{W72} R.B. Warfield, Jr., Exchange rings and decompositions of modules, Math. Ann. 199 (1972), 31--36.

\bibitem{W69} R.B. Warfield, Jr., A Krull-Schmidt Theorem for Infinite Sums of Modules, Proc.
Amer. Math. Soc. 22 (1969), 460--465. 

\end{thebibliography}
\end{document}